\documentclass{amsart}

\usepackage{amsmath,amssymb,amsthm}
\setlength\parindent{0pt}

\usepackage{graphicx,tikz}

\newtheorem*{thm}{Theorem}
\newtheorem*{proposition}{Proposition}

\newtheorem{lemma}{Lemma}

\theoremstyle{definition}

\theoremstyle{remark}

\begin{document}

\title[]{On a Kantorovich-Rubinstein Inequality}
\subjclass[2010]{Kantorovich-Rubinstein, Optimal Transport.} 
\thanks{S.S. is supported by the NSF (DMS-1763179) and the Alfred P. Sloan Foundation.}

\author[]{Stefan Steinerberger}
\address{Department of Mathematics, University of Washington, Seattle, WA 98195, USA}
\email{steinerb@uw.edu}

\begin{abstract} An easy consequence of Kantorovich-Rubinstein duality is the following: if $f:[0,1]^d \rightarrow \infty$ is Lipschitz and $\left\{x_1, \dots, x_N \right\} \subset [0,1]^d$, then
$$ \left| \int_{[0,1]^d} f(x) dx - \frac{1}{N} \sum_{k=1}^{N}{f(x_k)} \right| \leq \left\| \nabla f \right\|_{L^{\infty}} \cdot W_1\left( \frac{1}{N} \sum_{k=1}^{N}{\delta_{x_k}} , dx\right),$$
where $W_1$ denotes the $1-$Wasserstein (or Earth Mover's) Distance. We prove another such inequality with a smaller norm on $\nabla f$ and a larger Wasserstein distance.  Our inequality is sharp when the points are very regular, i.e. $W_{\infty} \sim N^{-1/d}$. This prompts the question whether these two inequalities are specific instances of an entire
underlying family of estimates capturing a duality between transport distance and function space. 
\end{abstract}

\maketitle

\section{Introduction}
\subsection{Introduction} One of the most important results in Optimal Transport is Kantorovich-Rubinstein duality 
\cite{edwards, ed2, fern, hanin, kant, kell, vill1, vill2}. It states that the $1-$Wasserstein (or Earth Mover Distance) can also be defined via duality
$$ W_1(\mu, \nu) = \sup_{f~\mbox{\tiny is}~1-\mbox{\tiny Lipschitz}} ~\int_{X} f d\mu - \int_{X} f d\nu.$$
We consider this inequality in what is perhaps the simplest special case: we fix $X = [0,1]^d$, we fix $\mu = dx$ as the Lebesgue measure and define $\nu$ as the empirical distribution of $N$ given points $\left\{x_1, \dots, x_N \right\} \subset [0,1]^d$ 
$$ \nu = \frac{1}{N} \sum_{k=1}^{N} \delta_{x_k}.$$
Then Kantorovich-Rubinstein duality immediately implies a very nice interpretation of the problem of numerically integrating a function $f:[0,1]^d \rightarrow \mathbb{R}$ since
$$ \left| \int_{[0,1]^d} f(x) dx - \frac{1}{N} \sum_{k=1}^{N}{ f(x_k)} \right| \leq \| \nabla f\|_{L^{\infty}} \cdot W_1\left(dx, \frac{1}{N} \sum_{k=1}^{N}{ \delta_{x_k}} \right).$$
In particular, if we do not know anything about the function $f$ except it being Lipschitz and if we are supposed to pick the points  $\left\{x_1, \dots, x_n\right\}$, it becomes reasonable to distribute the points in such a way that the Wasserstein cost is minimized. However, we also observe that this inequality seems somewhat extremal insofar as very strict conditions are imposed on the function while the condition on the set of points is quite weak: it is measured in the $W_1$ distance which is the smallest among all Wasserstein distances. It is a natural question whether one can balance these things against each other.

\begin{quote}
\textbf{Problem.} Let $1 \leq p \leq \infty$. For which Banach spaces $X_p$ 
$$ \left| \int_{[0,1]^d} f(x) dx - \frac{1}{N} \sum_{k=1}^{N}{ f(x_k)} \right| \leq c_d \cdot \| \nabla f\|_{X_p} \cdot W_p\left(dx, \frac{1}{N} \sum_{k=1}^{N}{ \delta_{x_k}} \right)?$$
\end{quote}

Certainly, there is such a canonical Banach space for $p=1$ and, by Kantorovich-Rubinstein duality, we have $X_1 = L^{\infty}$. Moreover, since the Wasserstein distance is an increasing quantity in $p$, we see that $X_p = L^{\infty}$ is always an admissible choice. The question is: is it possible to replace it by another Banach space when $p > 1$? In trying to investigate this question, the case $p= \infty$ is a natural end-point. 
\begin{quote}
\textbf{Question.} Is
$ X_{\infty} = L^{d,1}$ an admissible choice for $p=\infty$?
\end{quote}
$L^{d,1}$ is the Lorentz space refinement of classical Lebesgue spaces, i.e.
$$ \|f\|_{L^{d,1}} = d \cdot \int_{0}^{\infty}  \left| \left\{ x: |f(x)| \geq t \right\}\right|^{\frac{1}{d}} dt.$$
If $X_{\infty} = L^{d,1}$ was an admissible choice, then we can approximate any given measure $\mu$
arbitrarily well by discrete measures and would arrive at an inequality of the type
$$ \left| \int_{[0,1]^d} f(x) d\mu  -  \int_{[0,1]^d} f(x) dx  \right| \leq c_d \cdot \| \nabla f\|_{L^{d, 1}} \cdot W_{\infty}\left(\mu, dx \right).$$
Kantorovich-Rubinstein duality is considerably more general since it deals with two arbitrary measures while we require one of the measures to be the Lebesgue measure $\nu = dx$. However, it is relatively easy to see that if both measures are allowed to be singular, one cannot get a better bound than $\| \nabla f\|_{L^{\infty}}$: pick $\mu$ and $\nu$ to be two Dirac measures, the transport cost is determined by the behavior of the function on the line segment connecting the two points.

\subsection{The Result.} We now present our result which indicates that $X_{\infty} = L^{d, 1}$ could be a reasonable guess. Moreover, our result is actually optimal in the endpoint where points are as regularly distributed as a grid, i.e. $W_{\infty} \sim N^{-1/d}$.

\begin{thm} For any $f:[0,1]^d \rightarrow \mathbb{R}$ and any $\left\{x_1, \dots, x_N\right\} \subset [0,1]^d$,
$$ E = \left| \int_{[0,1]^d} f(x) dx - \frac{1}{N} \sum_{k=1}^{N}{ f(x_k)} \right| $$
is bounded from above by
$$E \leq c_d    \cdot  \|\nabla f\|_{L^{\infty}([0,1]^d)}^{\frac{d-1}{d}} \cdot \|\nabla f\|_{L^{1}([0,1]^d)}^{\frac{1}{d}}  \cdot N^{1/d} \cdot W_{\infty}\left(dx, \frac{1}{N} \sum_{k=1}^{N}{\delta_{x_k}} \right)^2.$$
\end{thm}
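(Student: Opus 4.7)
The plan is to convert the transport hypothesis into a concrete measurable partition, linearize via the fundamental theorem of calculus, and then balance two elementary $L^{\infty}$--$L^{1}$ bounds by optimizing a cutoff parameter. First, using the definition of $W_{\infty}$, I would obtain a measurable partition $V_{1},\dots,V_{N}$ of $[0,1]^{d}$ with $|V_{k}| = 1/N$ and $V_{k} \subset B(x_{k},r)$, where $r := W_{\infty}(dx,\tfrac{1}{N}\sum_{k}\delta_{x_{k}})$. Writing the error as $E \le \sum_{k}\int_{V_{k}}|f(x)-f(x_{k})|\,dx$ and applying the segment estimate $|f(x)-f(x_{k})| \le \int_{0}^{1}|\nabla f(x_{k}+t(x-x_{k}))|\,|x-x_{k}|\,dt$ together with $|x-x_{k}|\le r$ yields
\[
E \;\le\; r \int_{0}^{1} \sum_{k} \int_{V_{k}} \bigl|\nabla f\bigl(x_{k}+t(x-x_{k})\bigr)\bigr|\,dx\,dt.
\]

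Next, for each $t \in (0,1]$, I would perform the change of variables $z = x_{k}+t(x-x_{k})$, which carries $V_{k}$ onto the shrunken cell $W_{k}^{(t)} := (1-t)x_{k}+tV_{k} \subset B(x_{k},tr)$, has Jacobian $t^{-d}$, and has measure $t^{d}/N$. Setting $\Phi_{t}(z) := \sum_{k}\mathbf{1}_{W_{k}^{(t)}}(z)$, the inner sum rewrites as $t^{-d}\int|\nabla f|\,\Phi_{t}\,dz$, and the whole argument hinges on two bounds: $\|\Phi_{t}\|_{L^{1}} = t^{d}$ by additivity of measure, and $\|\Phi_{t}\|_{L^{\infty}} \le c_{d}\,r^{d}N$, because $z \in W_{k}^{(t)}$ forces $x_{k} \in B(z,tr)$, so the disjoint cells $V_{k}$ of mass $1/N$ must then pack inside $B(z,2r)$. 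Pairing against $|\nabla f|$ by $L^{1}$--$L^{\infty}$ duality produces
\[
\int |\nabla f|\,\Phi_{t}\,dz \;\le\; \min\bigl(t^{d}\,\|\nabla f\|_{L^{\infty}},\ c_{d}\,r^{d}N\,\|\nabla f\|_{L^{1}}\bigr).
\]

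The last step is to optimize the $t$-integral. I would split at the threshold $t_{0}^{d} := c_{d}\,r^{d}N\,\|\nabla f\|_{L^{1}}/\|\nabla f\|_{L^{\infty}}$, using the first bound for $t \le t_{0}$ and the second for $t_{0} \le t \le 1$. For $d \ge 2$ the tail $\int_{t_{0}}^{1}t^{-d}\,dt \le t_{0}^{1-d}/(d-1)$ is finite, and after substituting $t_{0}$ both halves evaluate to a dimensional multiple of $r^{2}\,N^{1/d}\,\|\nabla f\|_{L^{\infty}}^{(d-1)/d}\,\|\nabla f\|_{L^{1}}^{1/d}$, which is the claimed bound. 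If $t_{0}\ge 1$ the optimization degenerates, but in that case the target is already weaker than the trivial Kantorovich-Rubinstein estimate $E \le r\|\nabla f\|_{L^{\infty}}$ and so holds for free.

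The main obstacle is really the pointwise $L^{\infty}$ bound on $\Phi_{t}$: this is the single place where the full $W_{\infty}$ hypothesis is used rather than just $W_{1}$, and it is tight precisely in the regular regime $r \sim N^{-1/d}$, which accounts for the sharpness of the theorem at that endpoint. A secondary subtlety is that in $d=1$ the tail integral $\int t^{-1}\,dt$ diverges logarithmically, so the clean optimization above genuinely needs $d \ge 2$ and would presumably pick up an extra logarithmic factor or require a modified splitting in dimension one.
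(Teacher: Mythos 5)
Your argument is correct for $d\ge 2$ and recovers the stated bound, and it is a genuinely different route from the paper's. The paper first proves an isoperimetric estimate (Lemma~1) bounding $|\int f\,d\mu|$ by $c_d R\,\mu(\mathbb{R}^d)^{(d-1)/d}\,\|\nabla f\|_{L^{d,1}}$ via a cone construction and O'Neil's Lorentz-space duality, then converts the $L^{d,1}$ norm into the $L^\infty$--$L^1$ product by a separate interpolation (Lemma~2), and finally sums over transport cells using H\"older and an overlap count (Lemma~3). You bypass the Lorentz-space machinery entirely: the one-parameter family $\Phi_t$ of scaled indicator sums plays the role of the cone density $R^{d-1}\phi(Rx/\|x\|)/\|x\|^{d-1}$ in Lemma~1, and your cutoff optimization at $t_0$ realizes, in explicit real-variable form, the same $L^{d,1}$--$L^{d/(d-1),\infty}$ pairing the paper invokes abstractly; the $L^\infty$ bound on $\Phi_t$ is precisely the content of Lemma~3. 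What your route buys is elementarity and a transparent view of where the $W_\infty$ (as opposed to $W_1$) hypothesis actually enters; the paper's route gives the standalone isoperimetric Lemma~1, ties more directly to the conjectured $L^{d,1}$ endpoint, and handles all $d\ge 1$ uniformly.

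Two small remarks. It is safer to work with the disintegration $\{\mu_k\}$ of an optimal $W_\infty$-coupling, with $\mu_k\le dx$, $\mu_k(\mathbb{R}^d)=1/N$ and $\mathrm{supp}\,\mu_k\subset B(x_k,r)$, rather than assert a Monge partition $\{V_k\}$; your bounds $\|\Phi_t\|_{L^1}=t^d$ and $\|\Phi_t\|_{L^\infty}\le c_d r^d N$ carry over verbatim if one sets $\Phi_t(z)=\sum_k (d\mu_k/dx)\bigl(x_k+(z-x_k)/t\bigr)$. And your acknowledged logarithmic loss for $d=1$ is real but harmless: when $d=1$ the exponent on $\|\nabla f\|_{L^\infty}$ vanishes and the claim reduces to $E\lesssim r^2 N\|f'\|_{L^1}$, which follows with no $t$-integral at all from $|f(x)-f(x_k)|\le \|f'\|_{L^1(B(x_k,r))}$, the overlap count, and the elementary fact $rN\gtrsim 1$ (the $r$-balls around the $x_k$ must cover $[0,1]$).
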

\textbf{Remarks.} Several remarks (and an explanation as to in what sense we consider the result `close' to the conjectured result) are in order. 
\begin{enumerate}
\item The inequality is sharp for any set of points satisfying $W_{\infty} \sim N^{-1/d}$. For any such set, an extremal example can be taken, for $\varepsilon$ sufficiently small, as 
$$ f_{\varepsilon}(x) = \min\left\{ \varepsilon, \min_{1 \leq k \leq N} \| x - x_k\| \right\}.$$
As soon as $0 < \varepsilon \ll N^{-1/d}$, we have $E \sim \varepsilon$ as well as 
$$\|\nabla f_{\varepsilon}\|_{L^{\infty}([0,1]^d)} \sim 1 \quad \mbox{and} \quad
 \|\nabla f_{\varepsilon}\|_{L^{1}([0,1]^d)} \sim \varepsilon^d \cdot N.$$ 
\item We note the interpolation inequality (see Lemma 2)
$$ \left\| g \right\|_{L^{d, 1}([0,1]^d)} \lesssim_d \|g\|_{L^{\infty}([0,1]^d)}^{\frac{d-1}{d}} \cdot \| g\|_{L^{1}([0,1]^d)}^{\frac{1}{d}}.$$
For the extremal function $f_{\varepsilon}$, both sides are comparable.
\item As for the remaining term, we note that 
$$    W_{\infty}\left(dx, \frac{1}{N} \sum_{k=1}^{N}{\delta_{x_k}} \right) \lesssim N^{1/d} \cdot W_{\infty}\left(dx, \frac{1}{N} \sum_{k=1}^{N}{\delta_{x_k}} \right)^2$$
again with equality if $W_{\infty}$ is as small as possible (i.e. $W_{\infty} \sim N^{-1/d}$). 
\item The restriction to $[0,1]^d$ is to simplify comparison with existing results, one would naturally
expect the result to hold for fairly general domains.
\end{enumerate}

There are various intermediate results that lie between our Theorem and the conjectured result $X_{\infty} = L^{d,1}$. For example, one could ask whether there exists $0 < \delta < 1$ such that, with $E$ playing the same role as in the Theorem,
$$E \leq c_d    \cdot  \|\nabla f\|_{L^{\infty}([0,1]^d)}^{\frac{d-1}{d}} \cdot \|\nabla f\|_{L^{1}([0,1]^d)}^{\frac{1}{d}}  \cdot N^{\delta/d} \cdot W_{\infty}\left(dx, \frac{1}{N} \sum_{k=1}^{N}{\delta_{x_k}} \right)^{1+\delta}.$$
We are proving the estimate for $\delta = 1$. The smaller $\delta$, the harder the statement. In \S 4.2, we sketch a relatively simple proof for $\delta = d$ which avoids Lemma 1.

\subsection{A Lemma.} Most of the actual argument goes towards establishing an isoperimetric Lemma which seems like it might be of interest in its own right.

\begin{lemma} Let $\mu$ be a measure on $\mathbb{R}^d$ such that
\begin{enumerate}
\item $\mu$ is compactly supported in a ball of radius $R$ around the origin
\item $\mu$ is absolutely continuous and $\mu \leq dx$.
\end{enumerate}
Then, for all $f:\mathbb{R}^d \rightarrow \mathbb{R}$ such that $f(0)=0$, we have
$$ \left| \int_{\mathbb{R}^d} f(x) d\mu \right| \leq c_d\cdot  R \cdot \mu(\mathbb{R}^d)^{\frac{d-1}{d}} \cdot \left\| \nabla f\right\|_{L^{d, 1}(\|x\| \leq R)}$$
\end{lemma}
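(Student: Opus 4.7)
The plan is to reduce the lemma to a weighted bound on $|\nabla f|$ and then apply Hölder's inequality in Lorentz spaces, where $L^{d,1}$ is paired against its dual $L^{d/(d-1),\infty}$. Writing $\rho$ for the density of $\mu$ (so $0 \le \rho \le 1$) and using the fundamental theorem of calculus along radial lines,
$$|f(r\omega)| \;\le\; \int_0^r |\nabla f(s\omega)|\, ds,\qquad \omega\in S^{d-1},\; r\in[0,R],$$
since $f(0)=0$. Substituting into $\int |f|\, d\mu$ in polar coordinates and exchanging the order of integration $0\le s\le r\le R$, I would obtain
$$\int_{\mathbb{R}^d}|f|\, d\mu \;\le\; \int_{S^{d-1}}\!\int_0^R |\nabla f(s\omega)|\, N_\omega(s)\, ds\, d\omega,\qquad N_\omega(s):=\int_s^R \rho(r\omega)\,r^{d-1}\,dr.$$
Converting the outer integral back to Cartesian coordinates $y=s\omega$ gives
$$\int_{\mathbb{R}^d}|f|\, d\mu \;\le\; \int_{B(0,R)} |\nabla f(y)|\, K(y)\, dy, \qquad K(y):=\frac{N_{y/|y|}(|y|)}{|y|^{d-1}}.$$

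The real task is then to show $\|K\|_{L^{d/(d-1),\infty}(B(0,R))} \lesssim_d R\cdot \mu(\mathbb{R}^d)^{(d-1)/d}$; once this is done, the Lorentz-space Hölder inequality
$$\int |\nabla f|\, K\, dy \;\le\; \|\nabla f\|_{L^{d,1}}\,\|K\|_{L^{d/(d-1),\infty}}$$
completes the proof. This distributional estimate is where the two hypotheses on $\mu$ interact: the mass constraint $M:=\mu(\mathbb{R}^d)=\int_{S^{d-1}} M(\omega)\,d\omega$, where $M(\omega):=N_\omega(0)$, together with the density constraint $\rho \le 1$, which yields the pointwise bound $M(\omega)\le R^d/d$.

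To estimate $|\{K>t\}|$, I would use $N_\omega(s)\le M(\omega)$ and compute, in polar coordinates,
$$|\{K>t\}|\;\le\; \int_{S^{d-1}}\frac{1}{d}\Bigl[\min\bigl(R,\,(M(\omega)/t)^{1/(d-1)}\bigr)\Bigr]^d d\omega.$$
On the regime where the minimum equals $(M(\omega)/t)^{1/(d-1)}$, I combine $M(\omega)^{d/(d-1)} = M(\omega)\cdot M(\omega)^{1/(d-1)}$ with $M(\omega)^{1/(d-1)}\le (R^d/d)^{1/(d-1)}$ and the identity $\int M(\omega)\, d\omega = M$ to obtain $\int M(\omega)^{d/(d-1)}d\omega \lesssim_d R^{d/(d-1)}M$, giving
$$|\{K>t\}|\;\lesssim_d\;\frac{R^{d/(d-1)}M}{t^{d/(d-1)}}.$$
The complementary regime (where the min is $R$) forces $t\le R/d$, and a Markov-type bound on $\{M(\omega)>tR^{d-1}\}$ produces the same form. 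Taking $(d-1)/d$-th powers and multiplying by $t$ yields the desired $\|K\|_{L^{d/(d-1),\infty}}\lesssim_d R\,M^{(d-1)/d}$.

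The main obstacle is precisely this distributional estimate on $K$: one has to balance the two available bounds $N_\omega(s)\le M(\omega)$ and $N_\omega(s)\le R^d/d$ so that the integrated-mass constraint converts one power of $M$ into the sharp exponent $(d-1)/d$. Everything else (the radial Sobolev-type inequality, the polar-coordinate rewriting, the Lorentz-space duality) is standard machinery.
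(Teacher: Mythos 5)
Your proof is correct and follows essentially the same route as the paper's: radial fundamental theorem of calculus to reduce to a weighted integral of $|\nabla f|$, a weak-type $L^{d/(d-1),\infty}$ bound on the kernel obtained by balancing the mass constraint $\int_{S^{d-1}} M(\omega)\,d\omega = \mu(\mathbb{R}^d)$ against the density bound $M(\omega)\le R^d/d$, and O'Neil's Lorentz-space duality to close. The one cosmetic difference is that the paper first replaces $\mu$ by a dominating measure $\mu_2$ supported on the sphere $\{\|x\|=R\}$ (using that $g(x)=\int_0^{\|x\|}|\nabla f(t x/\|x\|)|\,dt$ is nondecreasing along each ray), whose induced kernel is exactly your $M(y/|y|)/|y|^{d-1}$; you instead bound $N_\omega(s)\le M(\omega)$ directly, which reaches the same kernel a little faster, and your super-level-set computation is in fact slightly more careful about capping the radial integral at $R$.
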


 To illustrate the Lemma, we consider two explicit examples which show that the Lemma is optimal in different regimes. For simplicity of exposition, we use $\sim$ to denote equivalence up to constants depending only on the dimension. The first example is as follows:
let $f:\mathbb{R}^d \rightarrow \mathbb{R}$ be given by
$$ f(x) = \begin{cases} \|x\|  \qquad &\mbox{if}~\|x\| \leq \delta \\ \delta \qquad &\mbox{otherwise,}\end{cases}$$
we let $\mu$ be the Lebesgue measure on the ball centered at 0 having total volume $\mu(\mathbb{R}^d)$ and let us assume that $0 < \delta \ll  \mu(\mathbb{R}^d)^{1/d}$. Then $R \sim \mu(\mathbb{R}^d)^{1/d}$ and
$$ \left| \int_{\mathbb{R}^d} f(x) d\mu \right| \sim \delta \cdot \mu(\mathbb{R}^d).$$
We also have
\begin{align*}
 \| \nabla f\|_{L^{d,1}} = d \int_0^{\infty} \left| \left\{x: \| \nabla f(x) \| \geq t \right\} \right|^{1/d} dt \sim \int_0^{1} \delta~ dt =  \delta.
 \end{align*}
 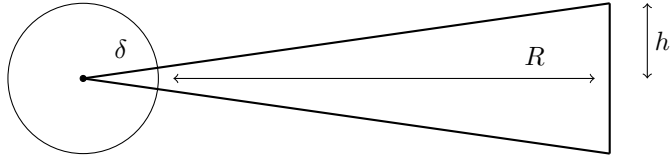
\begin{figure}[h!]
 \begin{center}
 \begin{tikzpicture}
 \filldraw (0,0) circle (0.04cm);
 \draw [thick] (0,0) -- (7,1);
 \draw [thick] (0,0) -- (7,-1);
 \draw [<->] (1.2, 0) -- (6.8,0);
 \draw [thick] (7,-1)-- (7,1);
 \node at (6, 0.3) {$R$};
 \draw [<->] (7.5, 0) -- (7.5, 1);
 \node at (7.7, 0.5) {$h$};
 \draw (0,0) circle (1cm);
 \node at (0.5, 0.4) {$\delta$};
 \end{tikzpicture}
 \end{center}
 \caption{Geometric setup for the second example}
 \label{fig:cone}
 \end{figure}
 
  The same function can be used to give a geometrically more interesting example (see Fig. \ref{fig:cone}): fix $0 < \delta \ll 1$, $R \gg 1, h \ll R$ and let $\mu$ denote the Lebesgue measure in a cone centered
 in 0 having diameter $R$ and let $f$ be the same function as above.
  Then, since $\delta \ll R$,
 $$ \left| \int_{\mathbb{R}^d} f(x) d\mu \right| \sim \delta \cdot \mu(\mathbb{R}^d).$$
 We also have
$$  \| \nabla f\|_{L^{d,1}} = d \int_0^{\infty} \left| \left\{x: \| \nabla f(x) \| \geq t \right\} \right|^{1/d} dt \sim\left( \delta^d \cdot \frac{h^{d-1}}{R^{d-1}}\right)^{\frac{1}{d}}.$$
 Using the relationship $R \cdot h^{d-1} \sim \mu(\mathbb{R}^d)$, we have
 \begin{align*}
R \cdot \mu(\mathbb{R}^d)^{\frac{d-1}{d}} \cdot  \left\| \nabla f\right\|_{L^{d, 1}} &\sim R \cdot \mu(\mathbb{R}^d)^{\frac{d-1}{d}} \cdot    \left( \delta^d \cdot \frac{h^{d-1}}{R^{d-1}}\right)^{\frac{1}{d}} \\
 &= R \cdot \mu(\mathbb{R}^d)^{\frac{d-1}{d}} \cdot  \left( \delta^d \cdot \frac{R h^{d-1}}{R^{d}}\right)^{\frac{1}{d}}\\
 &=  R \cdot \mu(\mathbb{R}^d)^{} \cdot  \frac{\delta}{ R} \sim  \left| \int_{\mathbb{R}^d} f(x) d\mu \right|.
 \end{align*}

\subsection{Related Results.}
It is classical that there exist $\left\{x_1, \dots, x_N\right\} \subset [0,1]^d$ such that for all Lipschitz $f:[0,1]^d \rightarrow \mathbb{R}$
$$ \left| \int_{[0,1]^d} f(x) dx - \frac{1}{N} \sum_{k=1}^{N}{ f(x_k)} \right| \leq c_d \frac{\| \nabla f\|_{L^{\infty}}}{N^{1/d}}$$
and that this is best possible. The result is often ascribed to Bakhalov \cite{bak}. There has been a lot of work on this problem, especially with regards
to how the implicit constant depends on the dimension $d$ and the smoothness (which we here fix to be $r=1$, we only consider one derivative). We
refer to \cite{hin, hin2, hinrichs, leo, novak, weed} for some recent results.                                                                                                                                                                                                                                                                                                                                                                                                                                                                                                                                                                                                                                                                                                                                                                                                                                                                                                                                                                                                                                                                                                                                                                                                                                                                                                                                                                                                                                                                                                                                                                                                                                                                                                                                                                                                                                                                                                                                                                                                                                                                                                                                                                                                                                                                                                                 
It seems to have been pointed out only rather recently \cite{brown} that, in fact, there exist $\left\{x_1, \dots, x_N\right\} \subset [0,1]^d$ such that for all Lipschitz $f:[0,1]^d \rightarrow \mathbb{R}$
$$ \left| \int_{[0,1]^d} f(x) dx - \frac{1}{N} \sum_{k=1}^{N}{ f(x_k)} \right| \leq c_d \frac{ \|\nabla f\|_{L^{\infty}([0,1]^d)}^{\frac{d-1}{d}} \cdot \|\nabla f\|_{L^{1}([0,1]^d)}^{\frac{1}{d}}}{N^{1/d}}.$$
The example in \cite{brown} is a regular grid -- as a consequence of our main result in this paper, we have the same estimate for any set of points that satisfy
$$ W_{\infty}\left(dx, \frac{1}{N} \sum_{k=1}^{N}{\delta_{x_k}} \right) \sim N^{-1/d}.$$
We also observe some vague similarity to recent results on Sobolev-Kantorovich inequalities. Cinti and Otto \cite{cinti} showed that for some $c_d > 0$ depending only on the dimension and $f:[0,1]^d \rightarrow \mathbb{R}$ normalized to $\int_{[0,1]^d} f(x) dx = 1$,
$$ \left\| \max \left\{ f - c_d, 0 \right\} \right\|_{L^{1+\frac{2}{3d}}}^{1 + \frac{2}{3d}} \lesssim \| \nabla f\|_{L^1} \cdot W_2(f, dx).$$
This has then been generalized by Ledoux \cite{ledoux} who showed that for any $p,q$, there exist $c_{p,q,d} > 0$ as well as $r, \theta$ such that
$$ \left\| \max \left\{ f - c_{p,q,d}, 0 \right\} \right\|_{L_{r}}^{\theta} \lesssim \| \nabla f\|_{L^q} \cdot W_p(f, dx).$$

\section{Three Lemmata}

\subsection{An Isoperimetric Lemma} In this section, we prove Lemma 1: what is interesting is that both the function $f$ and the measure $\mu$ may vary.\\

\textbf{Lemma 1.} \textit{ Let $\mu$ be a measure on $\mathbb{R}^d$ such that
\begin{enumerate}
\item $\mu$ is compactly supported in a ball of radius $R$ around the origin
\item $\mu$ is absolutely continuous and $\mu \leq dx$.
\end{enumerate}
Then, for all $f:\mathbb{R}^d \rightarrow \mathbb{R}$ such that $f(0)=0$, we have
$$ \left| \int_{\mathbb{R}^d} f(x) d\mu \right| \leq c_d\cdot  R \cdot \mu(\mathbb{R}^d)^{\frac{d-1}{d}} \cdot \left\| \nabla f\right\|_{L^{d, 1}(\|x\| \leq R)}.$$
}
\vspace{-10pt}
\begin{proof} 
Since $f(0) = 0$, we can use the fundamental theorem of calculus, the triangle inequality and the Cauchy-Schwarz inequality to argue that
\begin{align*}
 |f(x)| &= \left|    \int_{0}^{\|x \|}   \left\langle \nabla f \left( t \frac{x}{\|x\|}\right),  \frac{x}{\|x\|} \right\rangle dt  \right| \\
 &\leq\int_{0}^{\|x \|}     \left|     \left\langle \nabla f \left( t \frac{x}{\|x\|}\right),  \frac{x}{\|x\|} \right\rangle \right| dt   \leq \int_{0}^{\|x \|} \left|  \nabla f \left( t \frac{x}{\|x\|}\right) \right| dt.
\end{align*}
We will only work with this upper bound and will show that
$$ \int_{\mathbb{R}^d}  \int_{0}^{\|x \|} \left|  \nabla f \left( t \frac{x}{\|x\|}\right) \right| dt d\mu \leq  c_d\cdot  R \cdot \mu(\mathbb{R}^d)^{\frac{d-1}{d}} \cdot \left\| \nabla f\right\|_{L^{d, 1}(\|x\| \leq R)}.$$

 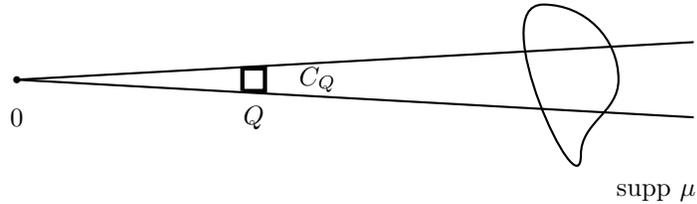
\begin{figure}[h!]
 \begin{center}
 \begin{tikzpicture}
 \filldraw (0,0) circle (0.04cm);
 \node at (0,-0.5) {0};
 \draw [thick] (0,0) -- (9,0.5);
 \draw [thick] (0,0) -- (9,-0.5);
 \draw [thick] (8,0) to[out=90, in=0] (7, 1) to[out=180, in =270] (7.5, -1) to[out=90, in =270] (8,0);
 \node at (8.5, -1.5) {$\mbox{supp} ~\mu$};
 \draw [ultra thick] (3,-0.15) -- (3,0.15) -- (3.3, 0.15) -- (3.3, -0.15) -- (3,-0.15);
 \node at (3.15, -0.5) {$Q$};
 \node at (4, 0) {$C_Q$};
 \end{tikzpicture}
 \end{center}
 \caption{Geometric Interpretation of the measure $\nu$.}
\label{fig:nu}
 \end{figure}
 We start by exchanging the order of integration and end up with 
$$ \int_{\mathbb{R}^d}  \int_{0}^{\|x \|} \left|  \nabla f \left( t \frac{x}{\|x\|}\right) \right| dt d\mu = \int_{\mathbb{R}^d} | \nabla f(x)| d\nu(x),$$
where $\nu$ is the measure that arises from integrating over all the line integrals weighted by $\mu$. $\nu$ has a simple geometric interpretation: for any point $x$, the measure $\nu$ simply counts the total amount of $\mu$ in the induced cone (see Fig. \ref{fig:nu}).
 More precisely, for the type of functions 
 $$g(x) = \int_{0}^{\|x \|} \left|  \nabla f \left( t \frac{x}{\|x\|}\right) \right| dt$$
  under consideration, we have
 $$  \int_{\mathbb{R}^d} g(x) d\mu(x) = \int_{\mathbb{R}^d} | \nabla f(x)| d\nu(x).$$
Thus, if $Q$ is an infinitesimal square (as in Fig. 2), the density of $\nu$ at the point $Q$ can be evaluated by a simple trick. Suppose we increase the size of the gradient $\| \nabla f\|$ by $\delta$ within $Q$. This increases the integral on the right by $\delta \cdot \nu(Q)$ while it increases the function $f$ by $\delta \cdot |Q|^{1/d}$ on the entire cone $C_Q$ after $Q$. Thus, for infinitesimal cubes $Q$
$$ \nu(Q) = |Q|^{1/d} \mu(C_Q).$$
For any given $\mu$ and all such functions $g$, we will now construct a measure $\mu_2$ such that $\mu_2(\mathbb{R}^d) = \mu(\mathbb{R}^d)$ and
 $$ \int_{\mathbb{R}^d} g(x) d\mu \leq  \int_{\mathbb{R}^d} g(x) d\mu_2.$$ 
$\mu_2$ will be a measure supported on $\left\{x \in \mathbb{R}^d: \|x\| = R \right\}$. For $A \subset \left\{x \in \mathbb{R}^d: \|x\| = R \right\}$, we define the cone 
$$ C_A = \left\{ \lambda x: 0 \leq \lambda \leq 1 \wedge x \in A \right\}$$
and set
$$ \mu_2(A) = \mu(C_A).$$
Since $\mu \leq dx$, we get that $\mu_2$ is absolutely continuous with respect to the surface measure $\sigma$ on $\left\{x \in \mathbb{R}^d: \|x\| = R\right\}$ and denote the Radon-Nikodym derivative by
$$ \phi = \frac{d \mu_2}{d \sigma}.$$
Note that we have $\| \phi\|_{L^{\infty}} \lesssim R$ and
$$ \int_{\|x\| = R}{ \phi ~ d\sigma } = \mu_2(\mathbb{R}^d) = \mu(\mathbb{R}^d).$$
By the same construction as above, we have
$$ \int_{\mathbb{R}^d} g d\mu_2 = \int_{\mathbb{R}^d} | \nabla f| d\nu_2$$
and $\nu_2$ is determined exactly as $\nu$ is.  However, since $\mu_2$ is much simpler than $\mu$, we can give an explicit expression for $\nu_2$: we claim that $\nu_2$ is absolutely
continuous with density
$$ \nu_2 = c_d R^{d-1} \frac{\phi \left(\frac{x}{\|x\|} R\right)}{ \|x\|^{d-1}} dx.$$
This can be seen as follows: let $Q$ be an infinitesimal cube centered at $x$ (oriented as in Fig. 2). We have, as $\mbox{diam}(Q) \rightarrow 0$ that
$$ \nu_2(Q) = |Q|^{1/d} \mu_2(C_Q).$$
It thus remains to determine $\mu_2(C_Q)$: clearly, the only relevant quantity is the intersection of $C_Q$ with $\left\{x \in \mathbb{R}^d: \|x\| = R\right\}$. The density
is given by the Radon-Nikodym derivative at $R x/\|x\|$. As for the surface area, we have that, as $|Q| \rightarrow 0$,
$$ \sigma(C_Q) = c_d R^{d-1} \frac{|Q|^{\frac{d-1}{d}}}{\|x\|^{d-1}}$$
and thus
$$ \mu_2(C_Q) = \phi\left(R \frac{x}{\|x\|}\right) \sigma(C_Q) = c_d R^{d-1}  \phi\left(R \frac{x}{\|x\|}\right) \frac{|Q|^{\frac{d-1}{d}}}{\|x\|^{d-1}}$$
from which we deduce
$$ \lim_{|Q| \rightarrow 0}{ \frac{\nu_2(Q)}{|Q|}} = c_d R^{d-1}  \frac{ \phi\left(R \frac{x}{\|x\|}\right)}{\|x\|^{d-1}}$$
Altogether
\begin{align*}
 \int_{\mathbb{R}^d} g d\mu \leq  \int_{\mathbb{R}^d} g d\mu_2 &= \int_{\mathbb{R}^d} | \nabla f| d\nu_2 \\
 &= c_d \int_{ \left\{ x \in \mathbb{R}^d: \|x\| \leq R \right\}} | \nabla f(x) | R^{d-1} \frac{\phi \left(\frac{x}{\|x\|} R\right)}{ \|x\|^{d-1}} dx.
 \end{align*}
Using the duality in Lorentz spaces  (an inequality of O'Neil \cite{oneil}), we have
$$ \int_{ \left\{ x \in \mathbb{R}^d: \|x\| \leq R \right\}} | \nabla f (x) |  R^{d-1} \frac{\phi \left(\frac{x}{\|x\|} R\right)}{ \|x\|^{d-1}} dx \leq \| \nabla f\|_{L^{d,1}} \cdot 
\left\|R^{d-1}  \frac{\phi \left(\frac{x}{\|x\|} R\right)}{ \|x\|^{d-1}} \right\|_{L^{\frac{d}{d-1}, \infty}}.$$
It remains to bound the second norm from above. We recall the definition of the Lorentz space, 
$$ \|h\|_{L^{\frac{d}{d-1}, \infty}}^{\frac{d}{d-1}} = \sup_{t>0} ~   t^{\frac{d}{d-1}} \cdot \left| \left\{x: |h(x)| \geq t \right\} \right|^{}.$$
Let us now fix any value $t>0$. We will compute the volume of the super-level set by switching to spherical coordinates. In direction $x$, we have
$$ R^{d-1}  \frac{\phi \left(\frac{x}{\|x\|} R\right)}{ \|x\|^{d-1}} \geq t \qquad \mbox{iff} \qquad \|x\| \leq   R\frac{\phi \left(\frac{x}{\|x\|} R\right)^{\frac{1}{d-1}}}{t^{\frac{1}{d-1}}}.$$
Therefore, changing to spherical coordinates and recalling $ \|\phi\|_{L^{\infty}} \lesssim_d R$,
\begin{align*}  \left| \left\{x: \frac{\phi \left(\frac{x}{\|x\|} R\right)}{ \|x\|^{d-1}} \geq t \right\} \right| &\lesssim_d \int_{ \mathbb{S}^{d-1}} \left(  R\frac{\phi \left(\frac{x}{\|x\|} R\right)^{\frac{1}{d-1}}}{t^{\frac{1}{d-1}}} \right)^d d\sigma(x) \\
&\lesssim_d \frac{R}{t^{\frac{d}{d-1}}}\int_{\|x\|=R}  \phi \left(x\right)^{\frac{d}{d-1}} d\sigma(x) \\
&\leq  \frac{R \cdot \| \phi\|_{L^{\infty}}^{\frac{1}{d-1}} }{t^{\frac{d}{d-1}}}  \int_{\|x\|=R}  \phi \left(x\right)^{} d\sigma(x)\\
&= \frac{R \cdot \| \phi\|_{L^{\infty}}^{\frac{1}{d-1}} }{t^{\frac{d}{d-1}}}  \mu(\mathbb{R}^d) \leq \frac{R^{\frac{d}{d-1}}}{t^{\frac{d}{d-1}}} \mu(\mathbb{R}^d).
\end{align*}
Thus
$$ \left\| R^{d-1} \frac{\phi \left(\frac{x}{\|x\|} R\right)}{ \|x\|^{d-1}} \right\|_{L^{\frac{d}{d-1}, \infty}} \lesssim_d  R \cdot \mu(\mathbb{R})^{\frac{d-1}{d}}$$
which is the desired statement.
\end{proof}

\subsection{Comparing spaces}
The purpose of this short section is to establish a simple interpolation Lemma. It is not new, very simple and has been stated many times in the literature, we include it for the convenience of the reader.
\begin{lemma} For any subset $X \subset [0,1]^d$, we have
$$ \|  h\|_{L^{d,1}(X)} \leq  d\cdot \| h\|_{L^{\infty}(X)}^{\frac{d-1}{d}} \| h \|_{L^1(X)}^{\frac{1}{d}}.$$
\end{lemma}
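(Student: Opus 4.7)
The plan is to reduce everything to a one-dimensional Hölder inequality applied to the distribution function. Set $M := \|h\|_{L^\infty(X)}$ and $a(t) := |\{x \in X : |h(x)| \geq t\}|$. Since $|h| \leq M$ almost everywhere, $a(t) = 0$ for $t > M$, so the definition of the Lorentz norm reduces to
$$ \|h\|_{L^{d,1}(X)} \;=\; d \int_0^M a(t)^{1/d} \, dt. $$

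The key step is a direct application of Hölder's inequality on $[0,M]$ to the product $a(t)^{1/d} \cdot 1$ with conjugate exponents $d$ and $d/(d-1)$:
$$ \int_0^M a(t)^{1/d}\, dt \;\leq\; \left( \int_0^M a(t) \, dt \right)^{1/d} \left( \int_0^M 1\, dt \right)^{(d-1)/d}. $$
By the layer-cake representation, $\int_0^M a(t)\, dt = \int_0^\infty a(t)\, dt = \|h\|_{L^1(X)}$, while the second factor is simply $M^{(d-1)/d} = \|h\|_{L^\infty(X)}^{(d-1)/d}$. Combining these and multiplying by $d$ produces the claimed inequality.

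There is essentially no obstacle: the statement is a clean interpolation between the $L^1$ and $L^\infty$ endpoints of the Lorentz scale, and the use of Hölder on the distribution function is the standard route. The only mild care required is to notice that truncating the outer integral at $t=M$ is what lets the $L^\infty$ bound enter as a finite length of the interval of integration, rather than as a pointwise bound inside the $L^1$-factor.
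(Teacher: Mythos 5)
Your proof is correct and takes essentially the same approach as the paper: truncate the outer integral at $\|h\|_{L^\infty}$, apply Hölder with exponents $d$ and $d/(d-1)$, and use the layer-cake identity for the $L^1$ factor. The paper's version is slightly more terse but the argument is identical.
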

\begin{proof}
\begin{align*}
\|  h\|_{L^{d,1}(X)} &= d\cdot \int_{0}^{\infty} \left|\left\{ x: \left| h(x) \right| > \lambda \right\}\right|^{1/d} d \lambda \\
&=  d\cdot \int_{0}^{\| h \|_{L^{\infty}}} \left|\left\{x: \left| h(x)\right| > \lambda \right\}\right|^{1/d} d \lambda \\
&\leq d\cdot \| h \|_{L^{\infty}}^{\frac{d-1}{d}} \cdot \left( \int_{0}^{\infty} \left|\left\{x: |h(x)| > \lambda \right\}\right| d\lambda \right)^{\frac{1}{d}} = d\cdot \| h\|_{L^{\infty}}^{\frac{d-1}{d}} \| h\|_{L^1}^{\frac{1}{d}}.
\end{align*}
\end{proof}

\subsection{Controlling the density}
Finally, we argue that if 
$$ W_{\infty}\left( \frac{1}{N} \sum_{k=1}^{N}{\delta_{x_k}} , dx\right) \qquad \mbox{is small,}$$
then this means that the points cannot be arbitrarily distributed: in particular, no ball of radius $W_{\infty}$ can contain a disproportionate number of points.
\begin{lemma} Let $\left\{x_1, \dots, x_N\right\} \subset [0,1]^d$ and let us abbreviate
$$ W_{\infty} = W_{\infty}\left( \frac{1}{N} \sum_{k=1}^{N}{\delta_{x_k}} , dx\right).$$
 Then, for any $x \in [0,1]^d$ and some universal constant $c_d$ depending only on the dimension,
$$ \# \left\{1 \leq i \leq N: \|x_i - x\| \leq  W_{\infty} \right\} \leq c_d \cdot W_{\infty}^{d} \cdot N.$$
\end{lemma}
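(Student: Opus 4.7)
The plan is to exploit the definition of $W_\infty$ as an infimum, over couplings $\pi$ of $\tfrac{1}{N}\sum \delta_{x_k}$ with Lebesgue measure on $[0,1]^d$, of $\operatorname{ess\,sup}_{(y,z)\sim\pi}\|y-z\|$. Because both marginals are compactly supported and this functional is weakly lower semicontinuous, the infimum is attained by some $\pi^\ast$. The key structural fact I would extract from $\pi^\ast$ is a disjoint decomposition of the cube into regions assigned to each point.

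Concretely, I would first disintegrate $\pi^\ast$ with respect to its discrete marginal. This produces measurable sets $A_1,\dots,A_N\subset [0,1]^d$ (up to null sets, a partition of $[0,1]^d$) with $|A_i|=1/N$ for every $i$, and, crucially, $A_i\subset \overline{B(x_i,W_\infty)}$. (If one prefers to avoid any existence issue for the infimum, the same argument runs with $W_\infty+\varepsilon$ in place of $W_\infty$, then letting $\varepsilon\to 0$.)

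Next I would fix $x\in[0,1]^d$ and consider the ball $B(x,W_\infty)$. For any index $i$ with $\|x_i-x\|\le W_\infty$, the triangle inequality gives
$$A_i\;\subset\;\overline{B(x_i,W_\infty)}\;\subset\;\overline{B(x,2W_\infty)}.$$
Since the $A_i$ are pairwise disjoint (up to null sets), summing the volumes yields
$$\frac{\#\{i:\|x_i-x\|\le W_\infty\}}{N}\;=\;\sum_{i:\|x_i-x\|\le W_\infty} |A_i|\;\le\;\bigl|\overline{B(x,2W_\infty)}\bigr|\;\le\;c_d\, W_\infty^{\,d},$$
and rearranging gives the claimed inequality.

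There is no genuine obstacle here; the only subtle point is the existence/measurability of the disintegration and attainment of the infimum. Since both marginals are Borel probability measures on a compact set, this is standard: either invoke the disintegration theorem directly, or circumvent it by working with $W_\infty+\varepsilon$ and arbitrary near-optimal couplings and letting $\varepsilon\to 0$. The geometric content of the lemma—the doubling of the radius in the triangle inequality—then immediately produces the factor absorbed into $c_d$.
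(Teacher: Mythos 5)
Your argument is correct and is essentially the same as the paper's: both use the optimal $W_\infty$-coupling to assign to each $x_i$ a region of measure $1/N$ inside $\overline{B(x_i,W_\infty)}$, observe via the triangle inequality that all such regions for points within $W_\infty$ of $x$ lie inside $\overline{B(x,2W_\infty)}$, and then sum the disjoint volumes. Your write-up is simply a bit more explicit about the disintegration and the attainment of the infimum, which the paper leaves implicit.
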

\begin{proof} Let $x \in [0,1]^d$ and suppose there are $X-1$ other points at distance at most $W_{\infty}$. There is an optimal transport plan such that neither of these points has
to transport their mass further than $W_{\infty}$, their total mass is thus contained in a $2\cdot W_{\infty}-$ball around $x$. However, the total amount of mass ending up in this ball is controlled and thus
$$ X \cdot \frac{1}{N} \leq \left| B(x,2W_{\infty}) \cap [0,1]^d \right| \leq  \left| B(x,2W_{\infty})  \right| \leq c_d\cdot W_{\infty}^d$$
from which the desired bound follows.
\end{proof}

\section{Proof of the Theorem}
\begin{proof} We can now combine the various Lemmata to obtain a proof. We again abbreviate, for simplicity of exposition,
$$ W_{\infty} = W_{\infty}\left( \frac{1}{N} \sum_{k=1}^{N}{\delta_{x_k}} , dx\right).$$
Denoting the region where the mass in $x_k$ is being transported to by $X_k$, we have
$$X_k \subset B(x_k, W_{\infty}) \cap [0,1]^d.$$ The triangle inequality combined with Lemma 1 yields
\begin{align*}
  \left| \int_{[0,1]^d} f(x) dx - \frac{1}{N} \sum_{k=1}^{N}{f(x_k)} \right| &\leq \sum_{k=1}^{N} \left| \int_{X_k} f(x) d\mu_k - \frac{f(x_k)}{N} \right| \\
  &\leq \sum_{k=1}^{N} \left| \int_{X_k} f(x) - f(x_k) d\mu_k \right| \\
  &\leq \sum_{k=1}^{N} \frac{W_{\infty}}{N^{\frac{d-1}{d}}} \cdot \| \nabla f\|_{L^{d,1}(X_k)}.
\end{align*}
Lemma 2 leads to the upper bound
 $$ \sum_{k=1}^{N} \frac{W_{\infty}}{N^{\frac{d-1}{d}}} \cdot \| \nabla f\|_{L^{d,1}(X_k)} \leq d  \frac{W_{\infty}}{N^{\frac{d-1}{d}}} \sum_{k=1}^{N} \|\nabla f\|_{L^{\infty}(X_k)}^{\frac{d-1}{d}} \|\nabla f\|_{L^1(X_k)}^{\frac{1}{d}}.$$
We obtain a further bound from above by setting
$$  \frac{W_{\infty}}{N^{\frac{d-1}{d}}} \sum_{k=1}^{N} \|\nabla f\|_{L^{\infty}(X_k)}^{\frac{d-1}{d}} \|\nabla f\|_{L^1(X_k)}^{\frac{1}{d}} \leq
 \frac{W_{\infty}}{N^{\frac{d-1}{d}}}  \|\nabla f\|_{L^{\infty}([0,1]^d)}^{\frac{d-1}{d}} \sum_{k=1}^{N} \|\nabla f\|_{L^1(X_k)}^{\frac{1}{d}}.$$
Applying H\"older's inequality results in 
$$  \frac{W_{\infty}}{N^{\frac{d-1}{d}}}  \|\nabla f\|_{L^{\infty}([0,1]^d)}^{\frac{d-1}{d}} \sum_{k=1}^{N} \|\nabla f\|_{L^1(X_k)}^{\frac{1}{d}} 
\leq  W_{\infty}  \|\nabla f\|_{L^{\infty}([0,1]^d)}^{\frac{d-1}{d}} \left( \sum_{k=1}^{N} \|\nabla f\|_{L^1(X_k)} \right)^{\frac{1}{d}}.$$
Finally, Lemma 3 guarantees that the different regions cannot over-count too much and
$$  \sum_{k=1}^{N} \|\nabla f\|_{L^1(X_k)} \leq   W_{\infty}^d \cdot N \cdot \| \nabla f\|_{L^1([0,1]^d)}.$$
From this we obtain the desired result.
\end{proof}

\section{Concluding Remarks}
\subsection{The missing step.} It is presumably the case that the argument is lossy. We believe that the crucial part is the following: if $W_{\infty} \sim N^{-1/d}$, then each point is transported to a nearby area. In particular, there is relatively little overlap between the transport: any given tiny area will not be traversed by many transport plans. However, if $W_{\infty}$ becomes bigger, this is harder to guarantee.
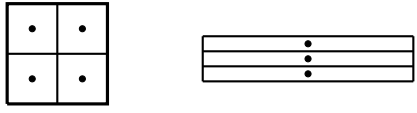
\begin{figure}[h!]
\begin{center}
\begin{tikzpicture}[scale=2]
\draw [very thick] (0,0) -- (2/3,0) -- (2/3,2/3) -- (0,2/3) -- (0,0);
\filldraw (1/6, 1/6) circle (0.02cm);
\filldraw (3/6, 1/6) circle (0.02cm);
\filldraw (1/6, 3/6) circle (0.02cm);
\filldraw (3/6, 3/6) circle (0.02cm);
\draw [thick] (1/3,0) -- (1/3,2/3);
\draw [thick] (2/3,0) -- (2/3,2/3);
\draw [thick] (0,1/3) -- (2/3,1/3);
\draw [thick] (0,2/3) -- (2/3,2/3);
\filldraw (2, 0.2) circle (0.02cm);
\filldraw (2, 0.3) circle (0.02cm);
\filldraw (2, 0.4) circle (0.02cm);
\draw [thick] (1.3, 0.15) -- (2.7, 0.15);
\draw [thick] (1.3, 0.25) -- (2.7, 0.25);
\draw [thick] (1.3, 0.35) -- (2.7, 0.35);
\draw [thick] (1.3, 0.45) -- (2.7, 0.45);
\draw [thick] (1.3, 0.15) -- (1.3, 0.45);
\draw [thick] (2.7, 0.15) -- (2.7, 0.45);
\end{tikzpicture}
\end{center}
\caption{Left: well-separated points and the regions they transport to. Right: more clustered points and the same regions.}
\label{fig:arr}
\end{figure}

This leads to an interesting question, one that would be implicitly answered by an estimate of the flavor
$$ \left| \int_{X} f(x) d\mu  -  \int_{X} f(x) dx  \right| \leq c_d \cdot \| \nabla f\|_{L^{d, 1}} \cdot W_{\infty}\left(\mu, dx \right),$$
is whether this remains true if some mass is transported a great distance. Are there particularly `busy intersections', small regions in space that are traversed by a large amount of measure in roughly the same direction?

\subsection{A cheap argument.} We conclude by showing a very cheap version of the argument which avoids Lemma 1 and leads to a result that is always weaker than our Theorem but just
as strong in the endpoint $W_{\infty} \sim N^{-1/d}$. We can think of $W_{\infty} \sim N^{-1/d}$ as both the best possible case but also as the case where the actual transport behavior is relatively simple. This result also illustrates that the difficulty is in not losing too many powers in $N$ along the way.

\begin{proposition}
For any Lipschitz $f:[0,1]^d \rightarrow \mathbb{R}$ and any $\left\{x_1, \dots, x_N\right\} \subset [0,1]^d$,
$$ E = \left| \int_{[0,1]^d} f(x) dx - \frac{1}{N} \sum_{k=1}^{N}{ f(x_k)} \right| $$
is bounded from above by
$$E \leq c_d    \cdot  \|\nabla f\|_{L^{\infty}([0,1]^d)}^{\frac{d-1}{d}} \cdot \|\nabla f\|_{L^{1}([0,1]^d)}^{\frac{1}{d}}  \cdot N^{} \cdot W_{\infty}\left(dx, \frac{1}{N} \sum_{k=1}^{N}{\delta_{x_k}} \right)^{d+1}.$$
\end{proposition}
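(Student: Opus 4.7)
The plan is to run the Theorem's proof verbatim but replace the sharp local estimate coming from Lemma 1 by a much cruder inclusion. Since $\mu_k \leq dx$ and $X_k \subset B_k := B(x_k, W_\infty)$, one simply inflates
\[
\int_{X_k} |f(x) - f(x_k)| d\mu_k \leq \int_{B_k} |f(y) - f(x_k)| dy,
\]
trading the mass $1/N$ for $|B_k| \sim_d W_\infty^d$. This is the single step that breaks the sharp Theorem and accounts for the full discrepancy factor $(NW_\infty^d)^{(d-1)/d}$, which is $\sim 1$ precisely at the endpoint $W_\infty \sim N^{-1/d}$ (consistent with the Proposition being just as strong as the Theorem there).

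The right-hand side is then controlled by a direct radial computation around $x_k$, which is the one place where the machinery of Lemma 1 is genuinely bypassed: for Lebesgue measure on a ball, the auxiliary boundary measure $\mu_2$ constructed in the proof of Lemma 1 is unnecessary. Using $|f(y)-f(x_k)| \leq \int_0^{\|y-x_k\|} |\nabla f(x_k+r\omega)| dr$, converting to polar coordinates centered at $x_k$, and applying $\int_r^{W_\infty} s^{d-1} ds \leq W_\infty^d/d$ yields
\[
\int_{B_k} |f - f(x_k)| dy \leq \frac{W_\infty^d}{d} \int_{B_k} \frac{|\nabla f(y)|}{\|y-x_k\|^{d-1}} dy.
\]
The weight $\|y-x_k\|^{-(d-1)}$ lies in $L^{d/(d-1),\infty}(B_k)$ with a dimensional norm independent of $W_\infty$, so O'Neil's Lorentz duality followed by Lemma 2 gives
\[
\int_{X_k} |f-f(x_k)| d\mu_k \leq c_d W_\infty^d \|\nabla f\|_{L^\infty(B_k)}^{(d-1)/d} \|\nabla f\|_{L^1(B_k)}^{1/d}.
\]

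The closing summation is identical to that of the Theorem: bound $\|\nabla f\|_{L^\infty(B_k)} \leq \|\nabla f\|_{L^\infty([0,1]^d)}$ uniformly, apply H\"older to extract $N^{(d-1)/d}$ from $\sum_k \|\nabla f\|_{L^1(B_k)}^{1/d}$, and invoke Lemma 3 in its dual form (each $y \in [0,1]^d$ lies in at most $c_d N W_\infty^d$ of the balls $B_k$) to obtain $\sum_k \|\nabla f\|_{L^1(B_k)} \leq c_d N W_\infty^d \|\nabla f\|_{L^1}$. Multiplying the factors collapses to the claimed $c_d N W_\infty^{d+1} \|\nabla f\|_{L^\infty}^{(d-1)/d} \|\nabla f\|_{L^1}^{1/d}$. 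The main obstacle is purely bookkeeping: one must check that the $X_k \subset B_k$ relaxation is the unique source of the $(NW_\infty^d)^{(d-1)/d}$ loss, and that no further powers of $N$ are accidentally generated in the H\"older/overlap pass.
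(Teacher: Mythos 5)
Your proposal is correct and follows essentially the same route as the paper's proof: relax $X_k \subset B(x_k, W_\infty)$ together with $\mu_k \leq dx$, apply a local Morrey/O'Neil/Lemma~2 estimate on each ball to produce $W_\infty^d\,\|\nabla f\|_{L^\infty(B_k)}^{(d-1)/d}\|\nabla f\|_{L^1(B_k)}^{1/d}$, then close with H\"older and the overlap bound from Lemma~3. The only cosmetic difference is that the paper packages the ball estimate as a standalone Lemma (Lemma~4, citing Evans for the Morrey step and reducing to the unit ball by scaling), whereas you derive the same inequality inline via the explicit polar-coordinate computation $\int_t^{W_\infty} s^{d-1}\,ds \leq W_\infty^d/d$; the two are identical in substance.
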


We need a different ingredient which is somewhat standard and included for the convenience of the reader. It was also used, for example, in \cite{brown}.
\begin{lemma} Let $B(0,r)$ be a ball of radius $r$ and let $f:B(0,r) \rightarrow \mathbb{R}$ be a Lipschitz function that vanishes in the origin, $f(0) = 0$. Then
$$ \left| \int_{B(0,r)} f(x) dx \right| \leq c_d \cdot r^d \cdot \| \nabla f\|_{L^{\infty}(B)}^{\frac{d-1}{d}} \| \nabla f \|_{L^1(B)}^{\frac{1}{d}}.$$
\end{lemma}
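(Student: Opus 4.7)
The plan is to reduce the estimate to controlling the weighted integral $\int_{B(0,r)} |\nabla f(x)|/\|x\|^{d-1}\,dx$, which arises naturally from integrating the fundamental theorem of calculus along radial rays, and then to control that quantity by splitting the ball at a free radial parameter and optimizing. Since the subsection frames this as the \emph{cheap} analogue of Lemma~1, I would avoid the Lorentz-space duality altogether and proceed by elementary means.

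For the reduction step, since $f(0)=0$, writing $\omega = x/\|x\|$ and applying the fundamental theorem of calculus gives $|f(x)| \le \int_0^{\|x\|} |\nabla f(t\omega)|\,dt$. Integrating over $B(0,r)$, passing to polar coordinates with $dx = s^{d-1}\,ds\,d\omega$, and then swapping the order of the $(s,t)$ integration over the triangle $\{0 \le t \le s \le r\}$ yields
$$
\left| \int_{B(0,r)} f(x)\,dx \right| \le \int_{S^{d-1}} \int_0^r |\nabla f(t\omega)| \frac{r^d - t^d}{d}\,dt\,d\omega \le \frac{r^d}{d} \int_{B(0,r)} \frac{|\nabla f(x)|}{\|x\|^{d-1}}\,dx.
$$

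It then suffices to bound the weighted integral on the right by a constant times $\|\nabla f\|_{L^\infty}^{(d-1)/d}\|\nabla f\|_{L^1}^{1/d}$. I would split the ball at a radius $r' \in (0,r]$ to be chosen: on $\{\|x\| \le r'\}$ bound $|\nabla f|$ by $\|\nabla f\|_{L^\infty}$ and evaluate $\int_{\|x\|\le r'}\|x\|^{-(d-1)}\,dx = c_d r'$ in polar coordinates, which contributes $c_d r'\|\nabla f\|_{L^\infty}$; on the annulus $\{r'<\|x\|\le r\}$ bound $\|x\|^{-(d-1)}$ pointwise by $(r')^{-(d-1)}$, which contributes $(r')^{-(d-1)}\|\nabla f\|_{L^1}$. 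Choosing $r' \sim (\|\nabla f\|_{L^1}/\|\nabla f\|_{L^\infty})^{1/d}$ equalizes the two terms and produces the claimed geometric-mean bound with exponents $(d-1)/d$ and $1/d$.

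I do not expect a serious obstacle here; the only book-keeping concerns the degenerate case in which the balanced $r'$ exceeds $r$. In that regime one has $\|\nabla f\|_{L^1} \geq c_d r^d \|\nabla f\|_{L^\infty}$, so simply taking $r'=r$ together with the trivial pointwise bound $|f| \le r\|\nabla f\|_{L^\infty}$ already yields a quantity compatible with the claimed inequality up to a dimensional constant. As a sanity check, the statement also follows in one line by applying Lemma~1 to $\mu = dx|_{B(0,r)}$ (where $R=r$ and $\mu(\mathbb{R}^d) \sim r^d$ give $R \cdot \mu(\mathbb{R}^d)^{(d-1)/d} \sim r^d$) and then invoking Lemma~2; the plan above is essentially that computation stripped of the Lorentz-space machinery, which is the point of this subsection.
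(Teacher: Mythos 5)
Your argument is correct, and it diverges from the paper's proof in a meaningful way. Both proofs begin with the same reduction: integrating the fundamental theorem of calculus along rays from the origin to obtain
\[
\left| \int_{B(0,r)} f(x)\,dx \right| \leq c_d\, r^d \int_{B(0,r)} \frac{|\nabla f(x)|}{\|x\|^{d-1}}\,dx
\]
(the paper cites this as the Morrey-type estimate from Evans after a scaling reduction; you derive it explicitly, including the $(r^d-t^d)/d$ kernel). Where you part ways is in bounding the weighted integral. The paper invokes O'Neil's Lorentz-space duality to get $\|\nabla f\|_{L^{d,1}(B)}$ (since $\|x\|^{-(d-1)} \in L^{d/(d-1),\infty}$) and then applies Lemma~2 to convert that into the geometric mean $\|\nabla f\|_{L^\infty}^{(d-1)/d}\|\nabla f\|_{L^1}^{1/d}$. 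You instead split the ball at an optimized radius $r'$, use the trivial $L^\infty$ bound inside and the weight bound $\|x\|^{-(d-1)} \leq (r')^{-(d-1)}$ outside, and balance the two contributions at $r' \sim (\|\nabla f\|_{L^1}/\|\nabla f\|_{L^\infty})^{1/d}$, handling the case $r'>r$ separately (your observation that $\|\nabla f\|_{L^1} \geq r^d \|\nabla f\|_{L^\infty}$ in that regime closes the loop correctly). Your route is genuinely more elementary and self-contained — it effectively reproves, in this special case, the composite of O'Neil's inequality with Lemma~2 by a direct real-variable optimization, which makes the ``cheap argument'' section even cheaper by removing Lorentz spaces entirely. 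What the paper's approach buys is brevity via the general machinery and consistency with the toolkit already set up for Lemma~1; what yours buys is transparency and independence from the Lorentz-space framework. Your sanity-check observation — that the lemma also follows from Lemma~1 applied to $\mu = dx|_{B(0,r)}$ with $R=r$, combined with Lemma~2 — is also correct.
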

\begin{proof} By scaling, it suffices to consider the unit ball $B = B(0,1)$.  Using an argument that is often employed in the proof of Morrey's inequality (see Evans \cite[\S 5.6.2]{evans})
$$ \left| \int_{B} f(x) dx \right| \leq c_d \int_{B} \frac{|\nabla f|}{\|x\|^{d-1}} dx.$$
Using O'Neil's inequality \cite{oneil} and Lemma 2, we obtain
$$ \left| \int_{B} f(x) dx  \right| \lesssim_d \left\| \nabla f \right\|_{L^{d,1}(B)} \lesssim_d  \| \nabla f\|_{L^{\infty}(B)}^{\frac{d-1}{d}} \| \nabla f \|_{L^1(B)}^{\frac{1}{d}}$$
which is the desired result.
\end{proof}

\begin{proof}[Proof of the Proposition]  We argue as above and obtain 
\begin{align*}
  \left| \int_{[0,1]^d} f(x) dx - \frac{1}{N} \sum_{k=1}^{N}{f(x_k)} \right| &\leq \sum_{k=1}^{N} \left| \int_{X_k} f(x) d\mu_k - \frac{f(x_k)}{N} \right| \\
  &\leq \sum_{k=1}^{N} \left| \int_{X_k} f(x) - f(x_k) d\mu_k \right|.
\end{align*}
We know, from Lemma 3, that $ X_k \subset B(x_k, W_{\infty})$ and we also know that $\mu_k \leq dx$. Therefore
$$  \sum_{k=1}^{N} \left| \int_{X_k} f(x) - f(x_k) d\mu_k \right| \leq  \sum_{k=1}^{N} \left| \int_{B(x,W_{\infty})} f(x) - f(x_k) dx \right|.$$
Using Lemma 4 and H\"older's inequality results in
\begin{align*}  \sum_{k=1}^{N} \left| \int_{B(x,W_{\infty})} f(x) - f(x_k) dx \right| &\lesssim   \sum_{k=1}^{N} W_{\infty}^d \cdot \| \nabla f\|_{L^{\infty}(B(x_k, W_{\infty})}^{\frac{d-1}{d}} \| \nabla f \|_{L^1(B(x_k, W_{\infty}))}^{\frac{1}{d}} \\
&\lesssim W_{\infty}^d \cdot \| \nabla f\|_{L^{\infty}}^{\frac{d-1}{d}}  \cdot \sum_{k=1}^{N}   \| \nabla f \|_{L^1(B(x_k, W_{\infty}))}^{\frac{1}{d}} \\
&\leq W_{\infty}^d \cdot N^{\frac{d-1}{d}}  \| \nabla f\|_{L^{\infty}}^{\frac{d-1}{d}} \left( \sum_{k=1}^{N}   \| \nabla f \|_{L^1(B(x_k, W_{\infty}))}\right)^{\frac{1}{d}}.
\end{align*}
Applying Lemma 3 as in the proof of the main result above leads to
$$ \left( \sum_{k=1}^{N}   \| \nabla f \|_{L^1(B(x_k, W_{\infty}))}\right)^{\frac{1}{d}} \leq W_{\infty} \cdot N^{\frac{1}{d}} \cdot \| \nabla f\|_{L^1([0,1]^d)}^{\frac{1}{d}}.$$
\end{proof}

\end{document}